\theoremstyle{plain}
\newtheorem{thm}{Theorem}[section]
\newtheorem{lem}[thm]{Lemma}
\newtheorem{cor}[thm]{Corollary}
\newtheorem{conj}[thm]{Conjecture}
\theoremstyle{definition}
\newtheorem{exmp}[thm]{Example}
\newtheorem{rem}[thm]{Remark}
\newtheorem{dfns-rems}[thm]{Definitions and Remarks}
\newtheorem{notas-rems}[thm]{Notations and Remarks}
\newtheorem{exmps-rems}[thm]{Examples and Remarks}
\begin{document}


\title[Stanley depth of the integral closure of monomial ideals]{Stanley depth of the integral closure of monomial ideals}


\author[S. A. Seyed Fakhari]{S. A. Seyed Fakhari}

\address{S. A. Seyed Fakhari, Department of Mathematical Sciences,
Sharif University of Technology, P.O. Box 11155-9415, Tehran, Iran.}

\email{fakhari@ipm.ir}

\urladdr{http://math.ipm.ac.ir/fakhari/}


\begin{abstract}
Let $I$ be a monomial ideal in the polynomial ring $S=\mathbb{K}[x_1,\dots,x_n]$. We study the Stanley depth of the integral closure $\overline{I}$ of $I$. We prove that for every integer $k\geq 1$, the inequalities ${\rm sdepth} (S/\overline{I^k}) \leq {\rm sdepth} (S/\overline{I})$ and ${\rm sdepth} (\overline{I^k}) \leq {\rm sdepth} (\overline{I})$ hold. We also prove that for every monomial ideal $I\subset S$ there  exist integers $k_1,k_2\geq 1$, such that for every $s\geq 1$, the inequalities  ${\rm sdepth} (S/I^{sk_1}) \leq {\rm sdepth} (S/\overline{I})$ and ${\rm sdepth} (I^{sk_2}) \leq {\rm sdepth} (\overline{I})$ hold. In particular,
$\min_k \{{\rm sdepth} (S/I^k)\} \leq {\rm sdepth} (S/\overline{I})$
and
$\min_k \{{\rm sdepth} (I^k)\} \leq {\rm sdepth} (\overline{I})$.
We conjecture that for every integrally closed monomial ideal $I$, the inequalities ${\rm sdepth}(S/I)\geq n-\ell(I)$ and ${\rm sdepth} (I)\geq n-\ell(I)+1$ hold, where $\ell(I)$ is the analytic spread of $I$. Assuming the conjecture is true, it follows together with the Burch's inequality that Stanley's conjecture holds for $I^k$ and $S/I^k$ for $k\gg 0$, provided that $I$ is a normal ideal.
\end{abstract}


\subjclass[2000]{Primary: 13C15, 05E99; Secondary: 13C13}


\keywords{Monomial ideal, Stanley depth, Stanley conjecture, integral closure, integrally closed ideal, normal ideal}


\maketitle


\section{Introduction and preliminaries} \label{sec1}

Let $\mathbb{K}$ be a field. Throughout this paper, the
polynomial ring $\mathbb{K}[x_1,\dots,x_n]$ in $n$ variables over the field $\mathbb{K}$ is denoted by $S$.

Let $M$
be a finitely generated $\mathbb{Z}^n$-graded $S$-module.
Let $u\in M$ be a homogeneous element and $Z\subseteq \{x_1,\dots,x_n\}$. The $\mathbb
{K}$-subspace $u\mathbb{K}[Z]$ generated by all elements $uv$
with $v\in \mathbb{K}[Z]$ is called a {\it Stanley space} of dimension
$|Z|$, if it is a free $\mathbb{K}[Z]$-module. Here, as usual, $|Z|$ denotes the number of elements of $Z$. A decomposition $\mathcal{D}$ of $M$ as a finite direct sum of Stanley
spaces is called a {\it Stanley decomposition} of $M$. The minimum
dimension of a Stanley space in $\mathcal{D}$ is called {\it Stanley depth}
of $\mathcal{D}$ and is denoted by ${\rm sdepth}(\mathcal {D})$. The
quantity $${\rm sdepth}(M):=\max\big\{{\rm sdepth}(\mathcal{D})\mid
\mathcal{D}\ {\rm is\ a\ Stanley\ decomposition\ of}\ M\big\}$$ is called
{\it Stanley depth} of $M$. Stanley \cite{s} conjectured
that $${\rm depth}(M) \leq {\rm sdepth}(M)$$ for all $\mathbb{Z}^n$-graded $S$-modules $M$. For an introduction to Stanley depth, we refer the reader to \cite{psty}.

Let $I\subset S$ be an arbitrary ideal. An element $f \in S$ is
{\it integral} over $I$, if there exists an equation
$$f^k + c_1f^{k-1}+ \ldots + c_{k-1}f + c_k = 0 {\rm \ \ \ \ with} \ c_i\in I^i.$$
The set of elements $\overline{I}$ in $S$ which are integral over $I$ is the {\it integral closure}
of $I$. It is known that the integral closure of a monomial ideal $I\subset S$ is a monomial ideal
generated by all monomials $u \in S$ for which there exists an integer $k$ such that
$u^k\in I^k$ (see \cite[Theorem 1.4.2]{hh'}).

\begin{rem} \label{clo}
Let $I$ be a monomial ideal and let $G(\overline{I})=\{m_1, \ldots, m_s\}$ be the set of minimal monomial generators of $\overline{I}$. For every $1\leq i \leq s$, there exists integer $k_i\geq 1$ such that $m_i^{k_i}\in I^{k_i}$. Let $k={\rm lcm}(k_1, \ldots,k_s)$ be the least common multiple of $k_1, \ldots k_s$. Now for every $1\leq i \leq s$, we have $m_i^k\in I^k$ and this implies that $u^k\in I^k$, for every monomial $u\in \overline{I}$. It follows that for every monomial $u\in S$, we have $u\in \overline{I}$ if and only if $u^k\in I^k$.
\end{rem}

\ \

The ideal $I$ is {\it integrally closed}, if $I = \overline{I}$, and $I$ is {\it normal} if all powers
of $I$ are integrally closed. By \cite[Theorem 3.3.18]{v'}, a monomial ideal $I$ is normal if and only if the Rees algebra $\mathcal{R}(I)$ is a normal ring.

Apel \cite{a} proved that  for every monomial ideal $I\subset S$, the inequality ${\rm sdepth}(S/I)\leq {\rm sdepth} (S/\sqrt{I})$ holds (see also \cite{i'}). It is clear that for every monomial ideal $I\subset S$, we have $I\subseteq \overline{I} \subseteq \sqrt{I}$ and therefore $\sqrt{\overline{I}}=\sqrt{I}$ and hence ${\rm sdepth}(S/\overline{I})\leq {\rm sdepth} (S/\sqrt{I})$. Now it is natural to ask about the relation of ${\rm sdepth}(S/I)$ and ${\rm sdepth}(S/\overline{I})$. There is no general inequality between ${\rm sdepth}(S/I)$ and ${\rm sdepth}(S/\overline{I})$, as the following examples show.

\begin{exmp} \label{ex1}
Let $I=(x_1^2, x_2^2, x_1x_2x_3)$ be a monomial ideal in the polynomial ring $S=\mathbb{K}[x_1, x_2,x_3]$. Then one can easily check that $\overline{I}=(x_1^2, x_2^2,x_1x_2)$. The maximal ideal $\mathfrak{m}=(x_1,x_2,x_3)$ of $S$ is an associated prime of $S/I$ and therefore \cite[Proposition 1.3]{hvz} (see also \cite{a}) implies that ${\rm sdepth}(S/I)=0$. Since $\mathfrak{m}$ is not an associated prime of $S/\overline{I}$, it follows from \cite[Proposition 2.13]{br} that ${\rm sdepth}(S/\overline{I})\geq 1$. Thus in this example ${\rm sdepth}(S/I) < {\rm sdepth}(S/\overline{I})$.
\end{exmp}

\begin{exmp} \label{ex2}
Let $I=(x_1^2x_2^2, x_1^2x_3^2, x_2^2x_3^2)$ be a monomial ideal in the polynomial ring $S=\mathbb{K}[x_1, x_2, x_3]$. Then the maximal ideal $\mathfrak{m}=(x_1,x_2,x_3)$ of $S$ is not an associated prime of $S/I$ and therefore \cite[Proposition 2.13]{br} implies that ${\rm sdepth}(S/I)\geq 1$. On the other hand by \cite[Theorem 2.4]{j}, $\mathfrak{m}$ is an associated prime of $S/\overline{I}$ and therefore using \cite[Proposition 1.3]{hvz} (see also \cite{a}), it follows that ${\rm sdepth}(S/\overline{I})=0$. Thus in this example ${\rm sdepth}(S/I) > {\rm sdepth}(S/\overline{I})$.
\end{exmp}

Examples \ref{ex1} and \ref{ex2} show that there is no general inequality between the Stanley depth of $S/I$ and the Stanley depth of $S/\overline{I}$. However, we prove that for every monomial ideal $I\subset S$ there  exist integers $k_1,k_2\geq 1$, such that for every $s\geq 1$, the inequalities  ${\rm sdepth} (S/I^{sk_1}) \leq {\rm sdepth} (S/\overline{I})$ and ${\rm sdepth} (I^{sk_2}) \leq {\rm sdepth} (\overline{I})$ hold (Corollary \ref{mainc}). In particular
$$\min_k \{{\rm sdepth} (S/I^k)\} \leq {\rm sdepth} (S/\overline{I})$$
and
$$\min_k \{{\rm sdepth} (I^k)\} \leq {\rm sdepth} (\overline{I}).$$

Ratliff \cite{r2} proves that for every ideal $I$ in a commutative Noetherian ring $S$, the asymptotic set of associated primes of integral closure of powers of $I$ is a subset of the asymptotic set of associated primes of powers of $I$. We use Corollary \ref{mainc} to give a new proof for Ratliff's theorem in the case of monomial ideals (Theorem \ref{ass}).

We also prove that for every monomial ideal $I\subset S$, the inequalities ${\rm sdepth} (S/\overline{I^k}) \leq {\rm sdepth} (S/\overline{I})$ and ${\rm sdepth} (\overline{I^k}) \leq {\rm sdepth} (\overline{I})$ hold for every integer $k\geq 1$ (Theorem \ref{first}). This implies that for every normal monomial ideal $I$, there exists $k$, such that ${\rm sdepth} (S/I^k)={\rm sdepth} (S/I^{sk})$, for every integer $s\geq 1$.

In Section \ref{sec2}, we present a conjecture, regarding the Stanley depth of integrally closed monomial ideals. In order to do this, we need to introduce some notation and well known results.

Let $I$ be a monomial ideal of $S$ with Rees algebra $\mathcal{R}(I)$
and let $\mathfrak{m}=(x_1,\ldots,x_n)$ be the graded maximal ideal of $S$. Then
the $\mathbb{K}$-algebra $\mathcal{R}(I)/\mathfrak{m}\mathcal{R}(I)$ is
called the {\it fibre ring} and its Krull dimension is called the {\it
analytic spread} of $I$, denote by $\ell(I)$. This invariant is a measure
for the growth of the number of generators of the powers of $I$. Indeed,
for $k\gg 0$, the Hilbert function $H(\mathcal{R}(I)/\mathfrak{m}\mathcal{R}(I),\mathbb{K},k)={\rm dim}_\mathbb
{K}(I^k/\mathfrak{m}I^k)$, which counts the number of generators of the
powers of $I$, is a polynomial function of degree $\ell(I)-1$. Let $I\subset S$, be a monomial ideal. An ideal $J\subseteq I$ is called a {\it reduction} of $I$, if $JI^t=I^{t+1}$, for some integer $t\geq 1$. It is known by \cite[Corollaries 8.2.5 and 8.3.9]{hs} that for every ideal $I$ and every reduction $J$ of $I$, the inequality ${\rm ht}(I)\leq \ell(I)\leq \mu(J)$ holds, where $\mu(J)$ denotes the number of minimal generators of
$J$.

Let $I \subset S$ be a monomial ideal. A classical
result by Burch \cite{b1} says that $$\min_k{\rm depth}(S/I^k) \leq
n-\ell(I).$$ By a theorem of Brodmann \cite{b}, the quantity ${\rm
depth}(S/I^k)$ is constant for large $k$. We call this constant value the
{\it limit depth} of $I$ and we denote it by $\lim_{k\rightarrow \infty}{\rm
depth}(S/I^k)$. Brodmann improves Burch's inequality by showing
that$$\lim_{k\rightarrow \infty}{\rm depth}(S/I^k) \leq n-\ell(I).$$
with equality if the Rees algebra $\mathcal{R}(I)$ is a normal ring.
In Section \ref{sec2}, we conjecture that for every integrally closed monomial ideal $I$, the inequalities ${\rm sdepth}(S/I)\geq n-\ell(I)$ and ${\rm sdepth} (I)\geq n-\ell(I)+1$ hold. Assuming the conjecture is true, it follows together with the Burch's inequality that Stanley's conjecture holds for $I^k$ and $S/I^k$ for $k\gg 0$, provided that $I$ is a normal ideal.


\section{Stanley depth and integral closure of monomial ideals} \label{sec2}

Let $I$ be a monomial ideal. As the first result of this paper, we compare the Stanley depth of the integral closure of $I$ and the Stanley depth of the integral closure of powers of $I$.

\begin{thm} \label{first}
Let $J\subseteq I$ be two monomial ideals in $S$. Then for every integer $k\geq 1$
$${\rm sdepth} (\overline{I^k}/\overline{J^k}) \leq {\rm sdepth} (\overline{I}/\overline{J}).$$
\end{thm}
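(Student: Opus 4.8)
The plan is to reduce the statement to a purely combinatorial comparison of two posets, using the method of Herzog, Vladoiu and Zheng for computing the Stanley depth of a quotient of monomial ideals, together with the dilation behaviour of the integral closure. First I would record the two elementary facts that drive the argument: for a monomial $x^b$ one has $x^b\in\overline{I}$ if and only if $(x^b)^k=x^{kb}\in\overline{I^k}$. The forward implication follows by raising an integrality equation of $x^b$ over $I$ to the $k$-th power, and the reverse follows because $x^{kb}\in\overline{I^k}$ gives $(x^{kb})^m\in I^{km}$ for some $m$, i.e. $(x^b)^{km}\in I^{km}$, whence $x^b\in\overline{I}$; this is exactly the computation behind Remark \ref{clo}, and the same equivalence holds for $J$. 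Consequently the map $a\mapsto ka$ on exponent vectors sets up an order-preserving correspondence between the monomials of $\overline{I}\setminus\overline{J}$ and those monomials of $\overline{I^k}\setminus\overline{J^k}$ whose exponent vector is divisible by $k$.

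Next I would fix an integer $g$ bounding every exponent occurring in a minimal generator of $\overline{I}$ or $\overline{J}$, and take $kg$ as the corresponding bound for $\overline{I^k}$ and $\overline{J^k}$; that $kg$ is a legitimate bound follows from the identity $\mathrm{NP}(I^k)=k\cdot \mathrm{NP}(I)$ for Newton polyhedra, which forces every minimal generator of $\overline{I^k}$ to have all exponents at most $kg$. Writing $P$ for the poset of lattice points of $\overline{I}\setminus\overline{J}$ inside $[0,g]^n$ and $Q$ for the poset of $\overline{I^k}\setminus\overline{J^k}$ inside $[0,kg]^n$, the map $a\mapsto ka$ identifies $P$ with the sublattice $Q\cap k\mathbb{Z}^n$ by the equivalences above. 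Recall that in this description ${\rm sdepth}(\overline{I^k}/\overline{J^k})$ is the maximum, over partitions of $Q$ into intervals, of the minimum over intervals $[u,v]$ of $\#\{i:v_i=kg\}$, and similarly for $\overline{I}/\overline{J}$ with bound $g$.

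The heart of the proof is to push an optimal interval partition of $Q$ down to one of $P$ without decreasing this statistic. Given a partition $\mathcal{Q}$ of $Q$ realising $s:={\rm sdepth}(\overline{I^k}/\overline{J^k})$, I would intersect each interval $[u,v]\in\mathcal{Q}$ with $k\mathbb{Z}^n$ and rescale: since $\{a:u\le ka\le v\}=\{a:\lceil u/k\rceil\le a\le\lfloor v/k\rfloor\}$, each $[u,v]$ contributes the interval $[\lceil u/k\rceil,\lfloor v/k\rfloor]$ of $P$ (possibly empty). These are pairwise disjoint and cover $P$, because every $a\in P$ maps to $ka\in Q\cap k\mathbb{Z}^n$, which lies in exactly one member of $\mathcal{Q}$; that each contributed interval actually sits inside $P$ uses the two elementary facts once more, namely that $ka\in\overline{I^k}\setminus\overline{J^k}$ forces $x^a\in\overline{I}\setminus\overline{J}$. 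Finally, the top element of a contributed interval is $\lfloor v/k\rfloor$, and since $v_i\le kg$ one has $\lfloor v_i/k\rfloor=g$ exactly when $v_i=kg$; hence $\#\{i:\lfloor v_i/k\rfloor=g\}=\#\{i:v_i=kg\}\ge s$, so the induced partition of $P$ witnesses ${\rm sdepth}(\overline{I}/\overline{J})\ge s$.

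I expect the ideas here to be clean, so the main obstacle is bookkeeping rather than substance: one must verify that intersecting an interval of $Q$ with the sublattice $k\mathbb{Z}^n$ again yields an interval (which is precisely the ceiling/floor identity above), and that the degree bound for the $k$-th powers can be taken to be exactly $kg$, so that the boundary coordinates counted by the statistic match up correctly under division by $k$. The one place demanding genuine care is checking that the contributed intervals land inside $P$ and not merely inside the ambient box $[0,g]^n$, and it is exactly there that the equivalences $x^b\in\overline{I}\Leftrightarrow x^{kb}\in\overline{I^k}$ and its analogue for $J$ are indispensable.
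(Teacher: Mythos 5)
Your proof is correct, but it takes a genuinely different route from the paper's. The shared core is the equivalence $x^b\in\overline{I}\Leftrightarrow x^{kb}\in\overline{I^k}$ (and its analogue for $J$), which is exactly Remark \ref{clo} and the opening paragraph of the paper's proof; after that the two arguments diverge. The paper never leaves the language of Stanley decompositions: it fixes an optimal decomposition $\overline{I^k}/\overline{J^k}=\bigoplus_{i=1}^m t_i\mathbb{K}[Z_i]$, assigns to each monomial $u\in\overline{I}\setminus\overline{J}$ the unique summand containing $u^k$, groups the monomials by target summand, replaces each nonempty group by its greatest common divisor $u_i$, and verifies covering and pairwise disjointness of the spaces $u_i\mathbb{K}[Z_i]$ by raising everything to $k$-th powers. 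You instead invoke the Herzog--Vladoiu--Zheng characterization \cite{hvz} and push an optimal interval partition of the poset $Q$ down to $P$ via $a\mapsto ka$; your interval $[\lceil u/k\rceil,\lfloor v/k\rfloor]$ is the combinatorial shadow of the paper's gcd construction (its bottom element is the exponent of the gcd of the monomials whose $k$-th powers land in the corresponding piece), with the covering/disjointness checks absorbed into the poset formalism. What your route buys is transparency and mechanical verification; what it costs is two external inputs the paper's self-contained argument avoids: the HVZ theorem in the form valid for an arbitrary bound vector dominating the minimal generators (which is indeed how it is stated and proved), and the identity $\mathrm{NP}(I^k)=k\,\mathrm{NP}(I)$ to certify the bound $kg$ for the generators of $\overline{I^k}$ and $\overline{J^k}$ --- this last point is genuinely needed and does hold, since any minimal lattice point of $k\,\mathrm{NP}(I)$ with a coordinate exceeding $kg$ could have that coordinate decreased while staying above the convex hull of $k$ times the generator exponents of $\overline{I}$. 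Both mechanisms adapt equally well to the paper's Theorem \ref{main}, where powers $I_1^{sk}/I_2^{sk}$ are compared with $\overline{I_1}/\overline{I_2}$.
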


\begin{proof}
Let $u\in S$ be a monomial. Then $u\in \overline{I}$ if and only if $u^s\in I^s$, for some $s\geq 1$ if and only if $u^{ks'}\in I^{ks'}$, for some $s'\geq 1$ if and only if $u^k\in \overline{I^k}$. By a similar argument $u\in \overline{J}$ if and only if $u^k\in \overline{J^k}$.

Now consider a Stanley decomposition $$\mathcal{D} : \overline{I^k}/\overline{J^k}=\bigoplus_{i=1}^m t_i \mathbb{K}[Z_i]$$ of $\overline{I^k}/\overline{J^k}$, such that ${\rm sdepth}(\mathcal{D})={\rm sdepth} (\overline{I^k}/\overline{J^k})$.  By the argument above, for every monomial $u\in \overline{I}\setminus \overline{J}$, we have $$u^k\in \overline{I^k}\setminus \overline{J^k}.$$Thus for each monomial $u\in \overline{I}\setminus \overline{J}$, we define $Z_u:=Z_i$ and $t_u:=t_i$, where $i\in\{1, \ldots, m\}$ is the uniquely determined index, such that $u^k\in t_i \mathbb{K}[Z_i]$. It is clear that $\overline{I}/\overline{J}\subseteq \sum u\mathbb{K}[Z_u]$, where the sum is taken over all monomials $u\in \overline{I}\setminus \overline{J}$. For the converse inclusion note that for every $u\in \overline{I}\setminus \overline{J}$ and every $h\in \mathbb{K}[Z_u]$, clearly we have $uh\in \overline{I}$. By the choice of $t_u$ and $Z_u$, we conclude $u^k\in t_u \mathbb{K}[Z_u]$ and therefore $u^kh^k\in t_u \mathbb{K}[Z_u]$. This implies that $u^kh^k\notin \overline{J^k}$ and as argument above shows, $uh\notin \overline{J}$. Thus $$\overline{I}/\overline{J}= \sum u\mathbb{K}[Z_u],$$ where the sum is taken over all monomials $u\in \overline{I}\setminus \overline{J}$.

Now for every $1\leq i \leq m$, let
$$U_i=\{u\in \overline{I}\setminus \overline{J}: Z_u=Z_i \ {\rm and} \ t_u=t_i\}.$$
Without lose of generality we may assume that $U_i\neq \emptyset$ for every $1\leq i \leq l$ and $U_i=\emptyset$ for every $l+1\leq i\leq m$. For every $1\leq i \leq l$, let $u_i$ be the greatest common divisor of elements of $U_i$. Note that $$\overline{I}/\overline{J}= \sum_{i=1}^l \sum u\mathbb{K}[Z_i],$$ where the second sum is taken over all monomials $u\in U_i$. Since for every $u\in U_i$, $u^k\in t_i \mathbb{K}[Z_i]$, it follows that $u_i^k\in t_i \mathbb{K}[Z_i]$. Therefore $u_i \in \overline{I}\setminus \overline{J}$ and hence $u_i\in U_i$. Now for every $u\in U_i$, we have $u\mathbb{K}[Z_i]\subseteq u_i\mathbb{K}[Z_i]$ and thus $$\sum_{u\in U_i} u\mathbb{K}[Z_i]=u_i\mathbb{K}[Z_i].$$It follows that $$\overline{I}/\overline{J}= \sum_{i=1}^l u_i\mathbb{K}[Z_i].$$

Next we prove that for every $1\leq i,j \leq l$ with $i\neq j$, the summands $u_i\mathbb{K}[Z_i]$ and $u_j\mathbb{K}[Z_j]$ intersect trivially. By contradiction let $v$ be a monomial in $u_i\mathbb{K}[Z_i]\cap u_j\mathbb{K}[Z_j]$. Then there exist $h_i\in \mathbb{K}[Z_i]$ and $h_j\in \mathbb{K}[Z_j]$, such that $u_ih_i=v=u_jh_j$. Therefore $u_i^kh_i^k=v^k=u_j^kh_j^k$. But $u_i\in U_i$ and hence $u_i^k\in t_i \mathbb{K}[Z_i]$, which implies that $u_i^kh_i^k\in t_i \mathbb{K}[Z_i]$. Similarly $u_j^kh_j^k\in t_j \mathbb{K}[Z_j]$. Thus $$v^k\in t_i \mathbb{K}[Z_i]\cap t_j \mathbb{K}[Z_j],$$ which is a contradiction, because $\bigoplus_{i=1}^m t_i \mathbb{K}[Z_i]$ is a Stanley decomposition of $\overline{I^k}/\overline{J^k}$. Therefore  $$\overline{I}/\overline{J}=\bigoplus_{i=1}^l u_i \mathbb{K}[Z_i]$$ is a Stanley decomposition of $\overline{I}/\overline{J}$ which proves ${\rm sdepth} (\overline{I}/\overline{J})\geq \min_{i=1}^l|Z_i|\geq{\rm sdepth} (\overline{I^k}/\overline{J^k})$.
\end{proof}

The following corollaries are immediate consequences of Theorem \ref{first}

\begin{cor}
Let $I \subset S$ be a monomial ideal. Then for every integer $k\geq 1$,
$${\rm sdepth} (\overline{I^k}) \leq {\rm sdepth} (\overline{I})$$
and
$${\rm sdepth} (S/\overline{I^k}) \leq {\rm sdepth} (S/\overline{I}).$$
\end{cor}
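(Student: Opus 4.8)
The plan is to derive both inequalities by feeding suitable degenerate ideals into Theorem \ref{first}, which applies to any pair of monomial ideals $J \subseteq I$ in $S$.

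First I would prove ${\rm sdepth}(\overline{I^k}) \leq {\rm sdepth}(\overline{I})$ by specializing Theorem \ref{first} to the choice $J = (0)$. The zero ideal is a monomial ideal contained in $I$, and its integral closure is again the zero ideal, so $\overline{(0)} = \overline{(0)^k} = (0)$. Consequently the quotient modules appearing in Theorem \ref{first} collapse to $\overline{I^k}/\overline{(0)^k} = \overline{I^k}$ and $\overline{I}/\overline{(0)} = \overline{I}$, and the theorem directly yields the desired inequality.

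For the second inequality ${\rm sdepth}(S/\overline{I^k}) \leq {\rm sdepth}(S/\overline{I})$, I would instead apply Theorem \ref{first} with the ambient ideal taken to be the unit ideal $S = (1)$ and the subideal taken to be $I$ itself. Here $I \subseteq S$ trivially, and since $S^k = S$ with $\overline{S} = S$, the integral closures are $\overline{S^k} = \overline{S} = S$. The relevant quotients therefore become $\overline{S^k}/\overline{I^k} = S/\overline{I^k}$ and $\overline{S}/\overline{I} = S/\overline{I}$, and Theorem \ref{first} gives the stated inequality.

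Since the corollary is an immediate specialization of the theorem, there is no substantive obstacle to overcome; the only points that need checking are the routine facts that $(0)$ and $(1)$ are legitimate monomial ideals to which Theorem \ref{first} applies and that their integral closures behave as indicated, so that the displayed quotients reduce exactly to the modules $\overline{I^k}$, $\overline{I}$, $S/\overline{I^k}$, and $S/\overline{I}$.
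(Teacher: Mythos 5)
Your proof is correct and matches the paper's intent: the paper states this corollary as an ``immediate consequence'' of Theorem \ref{first} without further argument, and the intended specializations are exactly yours, namely $J=(0)$ (using $\overline{(0)^k}=(0)$) for the ideal case and the pair $I\subseteq S$ with the unit ideal as ambient ideal (using $\overline{S^k}=S$) for the quotient case. Both degenerate choices are legitimate inputs to Theorem \ref{first}, whose proof goes through verbatim for them, so nothing further is needed.
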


\begin{cor} \label{nor}
Let $I\subset S$ be a normal monomial ideal. Then for every integer $k\geq 1$,
$${\rm sdepth} (I^k) \leq {\rm sdepth} (I)$$ and $${\rm sdepth} (S/I^k) \leq {\rm sdepth} (S/I).$$
\end{cor}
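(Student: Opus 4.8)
The plan is to deduce this directly from the preceding corollary (itself a specialization of Theorem \ref{first}) by using the defining property of a normal ideal to replace every integral closure by the ideal itself. The entire content of the statement is that normality lets one discard the bars over the ideals appearing in the earlier inequalities.

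First I would recall what normality gives us. By definition $I$ is normal precisely when all of its powers are integrally closed, that is, $\overline{I^k}=I^k$ for every integer $k\geq 1$. Taking $k=1$ yields in particular $\overline{I}=I$. These two equalities are the only facts about $I$ that will be used.

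Next I would invoke the preceding corollary, which asserts that for \emph{every} monomial ideal the inequalities
$${\rm sdepth}(\overline{I^k})\leq{\rm sdepth}(\overline{I})\qquad\text{and}\qquad{\rm sdepth}(S/\overline{I^k})\leq{\rm sdepth}(S/\overline{I})$$
hold for all $k\geq 1$. Substituting $\overline{I^k}=I^k$ and $\overline{I}=I$ into the first inequality gives ${\rm sdepth}(I^k)\leq{\rm sdepth}(I)$, and substituting the same equalities into the second gives ${\rm sdepth}(S/I^k)\leq{\rm sdepth}(S/I)$, which are exactly the two claimed bounds.

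There is essentially no obstacle here: the argument is a pure specialization, and the only point requiring care is to confirm that the hypothesis of normality is precisely what is needed to pass from $\overline{I^k}$ and $\overline{I}$ to $I^k$ and $I$. Should one wish to avoid citing the intermediate corollary, the same conclusion follows by applying Theorem \ref{first} twice, once with the pair $J=0\subseteq I$ (using $\overline{0}=0$) to obtain the ideal case, and once with the pair $I^k\subseteq S$ (using $\overline{S^k}=S$) to obtain the quotient case, and then substituting the normality equalities as above.
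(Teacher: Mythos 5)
Your proof is correct and takes essentially the same route as the paper, which states Corollary \ref{nor} as an immediate consequence of Theorem \ref{first} (via the preceding corollary) precisely because normality gives $\overline{I^k}=I^k$ and $\overline{I}=I$. The only blemish is notational: in your alternative derivation the second application of Theorem \ref{first} should use the pair $I\subseteq S$ with exponent $k$ (so that the quotients become $S/\overline{I^k}$ and $S/\overline{I}$), not ``the pair $I^k\subseteq S$.''
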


The following example from \cite{h} shows that the inequalities of Corollary \ref{nor} do not necessarily hold, if $I$ is not a normal ideal.

\begin{exmp} \label{ex3}
Let $I=(x_1^4, x_1^3x_2, x_1x_2^3, x_2^4, x_1^2x_2^2x_3)$ be a monomial ideal in the polynomial ring $S=\mathbb{K}[x_1, x_2, x_3]$. Then ${\rm depth} (S/I) = 0$ and ${\rm depth} (S/I^2) = 1$. It follows from \cite[Proposition 2.13]{br} that ${\rm sdepth} (S/I) = 0$ and ${\rm sdepth} (S/I^2)\geq 1$
\end{exmp}

Let $I\subset S$ be a normal monomial ideal and assume that Stanley's conjecture is true. Then Corollary \ref{nor} implies that ${\rm depth} (I^k) \leq {\rm sdepth} (I)$ and ${\rm depth} (S/I^k) \leq {\rm sdepth} (S/I)$, for every integer $k\geq 1$. In particular,
$$\lim_{k\rightarrow \infty}{\rm depth} (I^k) \leq {\rm sdepth} (I)$$ and $$\lim_{k\rightarrow \infty}{\rm depth} (S/I^k) \leq {\rm sdepth} (S/I).$$
Since $I$ is normal, by \cite[Theorem 10.3.2]{hh'} and \cite[Theorem 3.3.18]{v'}, we have  $$\lim_{k\rightarrow \infty}{\rm depth} (S/I^k)=n-\ell(I),$$ where $\ell(I)$ is the analytic spread of $I$. Therefore, Stanley's conjecture implies that for every normal monomial ideal $I$, the inequalities $${\rm sdepth}(S/I)\geq n-\ell(I)$$ and $${\rm sdepth} (I)\geq n-\ell(I)+1$$
hold. These inequalities have been proved for some special classes of monomial ideals. In \cite{psy2}, the authors prove that if $I\subset S$ is a weakly polymatroidal ideal (see \cite[Definition 12.7.1]{hh'}), which is generated in the same degree, then ${\rm sdepth}(S/I)\geq n-\ell(I)$ and ${\rm sdepth}(I)\geq n-\ell(I)+1$. In \cite{psy1} the authors study the Stanley depth of powers of edge ideal of forest graphs. Let $G=(V,E)$ be a forest graph with $n$ vertices and $p$ connected components and let
$$I(G)=(x_ix_j: v_iv_j \in E )$$
be the edge ideal of $G$. Then ${\rm sdepth}(S/I(G)^k)\geq p$, for every integer $k\geq 1$ (\cite[Theorem 2.7]{psy1}). But it is known and easy to prove that for every forest with $n$ vertices and $p$ connected components, $\ell(I(G))=n-p$ (see \cite{v}, page 50 for more details), which means that ${\rm sdepth}(S/I(G)^k)\geq n-\ell(I)$ for every integer $k\geq 0$.

The following example shows that these inequalities do not hold for an arbitrary monomial ideal.

\begin{exmp} \label{example}
Consider the ideal $I=(x_1^2, x_2^2, x_1x_2x_3, x_1x_2x_4)\subset S=\mathbb{K}[x_1, x_2, x_3, x_4]$ and let $J=(x_1^2, x_2^2)\subset S$. Now $J\subset I$ and $JI=I^2$. Therefore, $J$ is a reduction of $I$. Since ${\rm ht}(I)=2$, according to \cite[Corollaries 8.2.5 and 8.3.9]{hs}, we conclude that $\ell(I)=2$. But $\mathfrak{m}=(x_1, x_2,x_3, x_4)$ is the associated prime of $S/I$ and hence by \cite[Proposition 1.3]{hvz} (see also \cite{a}), ${\rm sdepth}(S/I)=0$ and by \cite[Corollary 1.2]{i}, ${\rm sdepth}(I)\leq 2$. This shows that the inequalities ${\rm sdepth}(S/I)\geq n-\ell(I)$ and ${\rm sdepth} (I)\geq n-\ell(I)+1$ do not hold for $I$.
\end{exmp}

The ideal $I$, in Example \ref{example}, is not integrally closed. In fact the author has no example of integrally closed monomial ideals, for which the inequalities ${\rm sdepth}(S/I)\geq n-\ell(I)$ and ${\rm sdepth} (I)\geq n-\ell(I)+1$ do not hold. Therefore he presents the following conjecture.

\begin{conj} \label{conje}
Let $I\subset S$ be an integrally closed monomial ideal. Then ${\rm sdepth}(S/I)\geq n-\ell(I)$ and ${\rm sdepth} (I)\geq n-\ell(I)+1$.
\end{conj}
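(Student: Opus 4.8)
Since the statement is a conjecture, I will outline the research strategy I would pursue rather than a complete argument. The plan is to convert both sides into combinatorial data attached to the Newton polyhedron and then to build a Stanley decomposition whose pieces are forced to be large precisely by the integral closure hypothesis. First I would reformulate the two quantities. Because $I$ is integrally closed, its exponent set is exactly $\mathbb{Z}^n \cap \mathrm{NP}(I)$, where $\mathrm{NP}(I)={\rm conv}\{\mathbf{a}: x^{\mathbf{a}}\in I\}={\rm conv}\{\mathbf{a}_1,\dots,\mathbf{a}_m\}+\mathbb{R}^n_{\geq 0}$ is the Newton polyhedron of the minimal generators $x^{\mathbf{a}_1},\dots,x^{\mathbf{a}_m}$. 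On the other side, I would invoke the polyhedral description of the analytic spread, namely $\ell(I)=1+\delta(I)$, where $\delta(I)$ is the maximal dimension of a compact (bounded) face of $\mathrm{NP}(I)$. One checks this on the small cases: for a monomial prime $(x_1,\dots,x_h)$ the maximal compact face is the simplex ${\rm conv}\{e_1,\dots,e_h\}$, giving $\ell=h$, while for the ideal of Example \ref{example} the only compact faces are edges, giving $\ell=2$, in agreement with the reduction computation there. With this dictionary the conjecture becomes
$${\rm sdepth}(S/I)\geq n-1-\delta(I)\quad\text{and}\quad {\rm sdepth}(I)\geq n-\delta(I).$$

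Next I would try to produce the two decompositions directly, using the Herzog--Vladoiu--Zheng interval method: represent $S/I$ (resp. $I$) through a finite poset of exponent vectors and partition it into intervals $[\mathbf{c},\mathbf{d}]$, each contributing a Stanley space free in the coordinate directions recorded by a set $Z(\mathbf{c},\mathbf{d})$. The heuristic that makes the bound plausible is that the only part of $\partial\,\mathrm{NP}(I)$ which can obstruct free movement in a coordinate direction is its bounded part, and by the reformulation this has dimension at most $\delta(I)$; away from the $\leq\delta(I)$-dimensional compact faces the polyhedron recedes in the full positive orthant, leaving at least $n-1-\delta(I)$ genuinely free directions along which an interval can be extended. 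I would aim to slice the relevant region along the recession directions so that each slab inherits $n-1-\delta(I)$ free variables, and, for the ideal $I$ itself, use that each relevant slab contains a minimal generator to gain the one extra free direction that upgrades the bound to $n-\delta(I)$. This is also where the two inequalities, which differ by exactly one, would be tied together.

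The hard part --- and the reason this is only a conjecture --- is the passage from the local polyhedral fact that ``compact faces are small'' to the global existence of a partition with controlled minimal piece size. Stanley depth is a maximum over all decompositions and admits no general lower-bound machinery, so the convexity of $\mathrm{NP}(I)$ must be exploited globally to rule out the ``holes'' that create small pieces; indeed it is exactly the failure of integral closedness in Example \ref{example}, where a lattice point of $\mathrm{NP}(I)$ is missing from $I$, that destroys the bound, so any valid argument must use the hypothesis ${\rm exp}(I)=\mathbb{Z}^n\cap\mathrm{NP}(I)$ in an essential way. A natural fallback is induction on $n$ via the short exact sequence $0\to S/(I:x_i)\to S/I\to S/(I,x_i)\to 0$ together with an inequality of Rauf type relating ${\rm sdepth}(S/I)$ to ${\rm sdepth}(S/(I:x_i))$ and ${\rm sdepth}(S/(I,x_i))$; here $(I:x_i)$ remains integrally closed, but $(I,x_i)$ need not, and controlling how $\ell$ changes under these two operations is the obstruction that currently confines the result to the special classes --- weakly polymatroidal ideals and forest edge ideals --- where the decomposition can be written down explicitly.
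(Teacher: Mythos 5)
You were asked to prove Conjecture \ref{conje}, but the paper itself contains no proof of it: the statement is left open, supported only by indirect evidence, namely that Stanley's conjecture together with Brodmann's equality $\lim_{k\rightarrow\infty}{\rm depth}(S/I^k)=n-\ell(I)$ for normal ideals and the monotonicity result (Corollary \ref{nor}) would yield the inequalities for normal ideals, by the special classes (weakly polymatroidal ideals generated in one degree, powers of forest edge ideals), and by Example \ref{example} showing that integral closedness cannot be dropped. So you read the situation correctly, and your submission is rightly a strategy rather than a proof; there is no paper proof to measure it against.

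That said, your framing is genuinely different from the paper's motivation and its verifiable ingredients are sound. For an integrally closed monomial ideal the exponent set is indeed $\mathbb{Z}^n\cap\mathrm{NP}(I)$, and the identity $\ell(I)=1+\delta(I)$, with $\delta(I)$ the maximal dimension of a compact face of the Newton polyhedron, is a known theorem (Bivi\`a-Ausina) whose instances you check correctly on monomial primes and on Example \ref{example}. The paper argues top-down (asymptotic depth of powers, conditional on Stanley's conjecture, and only for normal ideals); your approach is bottom-up and, if completed, would prove the conjecture unconditionally for all integrally closed ideals, which is strictly more than the paper's heuristic delivers. The gap is exactly the one you admit: there is no general machinery for lower bounds on ${\rm sdepth}$, and nothing in your outline specifies how the ``slabs'' are to be chosen so that every interval of the partition has rank at least $n-1-\delta(I)$; the convexity of $\mathrm{NP}(I)$ enters only as a heuristic. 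Your fallback induction has the further concrete obstruction you name: $(I,x_i)$ need not be integrally closed, and $\ell$ is not controlled under either $(I:x_i)$ or $(I,x_i)$, so the exact sequence argument cannot close on itself. In short: nothing is proved, but your polyhedral dictionary is correct, is absent from the paper, and is a reasonable place from which to attack the conjecture.
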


Assuming the conjecture is true, it follows together with Burch's inequality that Stanley's conjecture holds for $I^k$ and $S/I^k$ for $k\gg 0$, provided that $I$ is a normal ideal.

It is clear that for every monomial ideal $I$ in $S$, the Stanley depth of $I$ is at most $n$. Hence the Stanley depth of infinitely many powers of
$I$ is constant. The following corollary gives a refinement of this fact in the case of normal ideals.

\begin{cor}
Let $I\subset S$ be a normal monomial ideal. Then the following statements hold.
\begin{itemize}
\item[(i)] There exists an integer $k\geq 1$ such that for every integer $s\geq 1$,
 we have ${\rm sdepth} (I^k) = {\rm sdepth} (I^{sk})$.

\item[(ii)] There exists an integer $k\geq 1$ such that for every integer $s\geq 1$,
 we have ${\rm sdepth} (S/I^k) = {\rm sdepth} (S/I^{sk})$.
\end{itemize}
\end{cor}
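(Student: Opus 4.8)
The plan is to combine Corollary \ref{nor} with the elementary fact that the Stanley depth of a finitely generated module takes only finitely many values, so that a minimum among the powers is actually attained. I will prove (i); statement (ii) follows by the identical argument with $I^k$ replaced by $S/I^k$ throughout.

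First I would record that every power of a normal ideal is again normal: if $I$ is normal, then for each $k\geq 1$ and each $s\geq 1$ the ideal $(I^k)^s=I^{ks}$ is integrally closed, so $I^k$ is itself a normal monomial ideal. This is the bookkeeping observation that lets me feed $I^k$, rather than $I$, into the earlier result.

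Next I would apply Corollary \ref{nor} to the normal ideal $I^k$, which yields
$${\rm sdepth}(I^{ks})={\rm sdepth}((I^k)^s)\leq {\rm sdepth}(I^k)$$
for all $k,s\geq 1$. In other words, along any divisibility chain the function $k\mapsto {\rm sdepth}(I^k)$ is nonincreasing. Since $0\leq {\rm sdepth}(I^k)\leq n$ for every $k$, the set $\{{\rm sdepth}(I^k):k\geq 1\}$ is a finite set of nonnegative integers, so its minimum $d:=\min_k {\rm sdepth}(I^k)$ is attained; I fix $k_0\geq 1$ with ${\rm sdepth}(I^{k_0})=d$.

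Finally I would conclude by a squeeze. For every $s\geq 1$ the displayed inequality (with $k=k_0$) gives ${\rm sdepth}(I^{k_0 s})\leq {\rm sdepth}(I^{k_0})=d$, while the minimality of $d$ gives ${\rm sdepth}(I^{k_0 s})\geq d$. Hence ${\rm sdepth}(I^{k_0 s})={\rm sdepth}(I^{k_0})$ for every $s\geq 1$, which is exactly (i). I do not expect a genuine obstacle: the essential content is already packaged in Corollary \ref{nor}, and the only subtlety is the point that the corollary must be invoked for the normal ideal $I^{k_0}$ (not merely for $I$) in order to control the multiples $I^{k_0 s}$, combined with the boundedness of Stanley depth that guarantees the minimizing power $k_0$ exists.
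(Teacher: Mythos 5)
Your proposal is correct and follows essentially the same route as the paper: choose $k_0$ attaining $\min_t\{{\rm sdepth}(I^t)\}$, note that $I^{k_0}$ is itself normal, apply Corollary \ref{nor} to $I^{k_0}$ to get ${\rm sdepth}(I^{k_0 s})\leq {\rm sdepth}(I^{k_0})$, and conclude by minimality. The only difference is that you spell out two points the paper leaves implicit (that powers of a normal ideal are normal, and that the minimum is attained because Stanley depth takes finitely many values), which is fine.
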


\begin{proof}
(i) Let $k\geq 1$ be an integer such that ${\rm sdepth} (I^k) =\min_t \{{\rm sdepth} (I^t)\}$. Since $I^k$ is a normal ideal, Corollary \ref{nor}, implies that for every integer $s\geq 1$,
 we have $${\rm sdepth} (I^k) \geq {\rm sdepth} (I^{sk})$$and thus by the choice of $k$,
 $${\rm sdepth} (I^k) = {\rm sdepth} (I^{sk}).$$

(ii) The proof is similar to the proof of (i).
\end{proof}

Let $I$ be a monomial ideal. In the following theorem we compare the Stanley depth of $\overline{I}$ and the Stanley depth of powers of $I$. We will use this result in Section \ref{sec3}, to give a new proof for a result of Ratliff in the case of monomial ideals (see Theorem \ref{ass}).

\begin{thm} \label{main}
Let $I_2\subseteq I_1$ be two monomial ideals in $S$. Then there  exists an integer $k\geq 1$, such that for every $s\geq 1$
$${\rm sdepth} (I_1^{sk}/I_2^{sk}) \leq {\rm sdepth} (\overline{I_1}/\overline{I_2}).$$
\end{thm}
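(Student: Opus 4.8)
The plan is to reduce this to the argument of Theorem \ref{first}, but to replace the relation ``$u\in\overline{I}\iff u^k\in\overline{I^k}$'' used there by the sharper relation supplied by Remark \ref{clo}, which compares $\overline{I_j}$ directly with the ordinary powers $I_j^{k_j}$. Concretely, I would first apply Remark \ref{clo} separately to $I_1$ and to $I_2$ to obtain integers $k_1,k_2\geq 1$ such that, for every monomial $u$, we have $u\in\overline{I_1}\iff u^{k_1}\in I_1^{k_1}$ and $u\in\overline{I_2}\iff u^{k_2}\in I_2^{k_2}$. Set $k={\rm lcm}(k_1,k_2)$. The crucial point is that these equivalences survive passage to multiples: if $u\in\overline{I_1}$ then $u^{k_1}\in I_1^{k_1}$, whence $u^{k_1t}=(u^{k_1})^t\in I_1^{k_1t}$ for every $t\geq 1$, while the converse is immediate from the definition of integral closure. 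Hence for every $s\geq 1$ and every monomial $u$ we have $u\in\overline{I_1}\iff u^{ks}\in I_1^{ks}$, and likewise for $I_2$. This uniformity in $s$ is exactly what allows a single $k$ to work for all $s$, and I expect it to be the main point to get right; everything else is a faithful copy of the bookkeeping in Theorem \ref{first}.

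With $k$ fixed, I would fix an arbitrary $s\geq 1$, write $k'=ks$, and start from an optimal Stanley decomposition $I_1^{k'}/I_2^{k'}=\bigoplus_{i=1}^m t_i\mathbb{K}[Z_i]$ (note $I_2^{k'}\subseteq I_1^{k'}$ since $I_2\subseteq I_1$). To each monomial $u\in\overline{I_1}\setminus\overline{I_2}$ I assign the unique pair $(Z_u,t_u)=(Z_i,t_i)$ for which $u^{k'}\in t_i\mathbb{K}[Z_i]$; this is legitimate because the equivalences above give $u^{k'}\in I_1^{k'}\setminus I_2^{k'}$. As in Theorem \ref{first}, one checks $\overline{I_1}/\overline{I_2}=\sum u\mathbb{K}[Z_u]$: the inclusion $\supseteq$ holds because $uh\in\overline{I_1}$ for $h\in\mathbb{K}[Z_u]$, while $u^{k'}h^{k'}\in t_u\mathbb{K}[Z_u]$ forces $(uh)^{k'}\notin I_2^{k'}$ and hence $uh\notin\overline{I_2}$.

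I would then group the monomials into sets $U_i=\{u\in\overline{I_1}\setminus\overline{I_2}: Z_u=Z_i,\ t_u=t_i\}$, retaining only the nonempty ones, say for $1\leq i\leq l$. Letting $u_i=\gcd(U_i)$ and using that $u_i^{k'}=\gcd\{u^{k'}:u\in U_i\}$ remains inside $t_i\mathbb{K}[Z_i]$, I conclude $u_i\in\overline{I_1}\setminus\overline{I_2}$ and $\sum_{u\in U_i}u\mathbb{K}[Z_i]=u_i\mathbb{K}[Z_i]$, so that $\overline{I_1}/\overline{I_2}=\sum_{i=1}^l u_i\mathbb{K}[Z_i]$. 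Directness follows exactly as before: a monomial $v=u_ih_i=u_jh_j$ in two summands would give $v^{k'}\in t_i\mathbb{K}[Z_i]\cap t_j\mathbb{K}[Z_j]$ with $i\neq j$, contradicting the directness of the chosen decomposition of $I_1^{k'}/I_2^{k'}$.

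Finally, the resulting decomposition $\overline{I_1}/\overline{I_2}=\bigoplus_{i=1}^l u_i\mathbb{K}[Z_i]$ yields ${\rm sdepth}(\overline{I_1}/\overline{I_2})\geq\min_{i}|Z_i|\geq{\rm sdepth}(I_1^{ks}/I_2^{ks})$. Since $s$ was arbitrary and $k$ was chosen once and for all, this establishes the stated inequality for every $s\geq 1$. The only genuinely new ingredient compared with Theorem \ref{first} is the uniform choice of $k$ via Remark \ref{clo} together with the observation that the defining equivalences are stable under taking multiples; the combinatorial manipulation with greatest common divisors and the verification of triviality of intersections is identical to the earlier proof.
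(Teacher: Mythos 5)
Your proposal is correct and takes essentially the same approach as the paper's own proof: you choose $k={\rm lcm}(k_1,k_2)$ via Remark \ref{clo}, observe that the equivalences $u\in\overline{I_j}\iff u^{sk}\in I_j^{sk}$ are stable under multiples of $k$, and then transfer an optimal Stanley decomposition of $I_1^{sk}/I_2^{sk}$ to one of $\overline{I_1}/\overline{I_2}$ by the same gcd-grouping and directness argument as in Theorem \ref{first}. No gaps.
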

\begin{proof}
Note that by Remark \ref{clo}, there exist integers  $k_1, k_2\geq 1$, such that for every monomial $u\in S$, we have $u^{k_1}\in I_1^{k_1}$ (resp. $u^{k_2}\in I_2^{k_2}$) if and only if $u\in \overline{I_1}$ (resp. $u\in \overline{I_2}$). Let $k={\rm lcm}(k_1,k_2)$ be the least common multiple of $k_1$ and $k_2$. Then for every monomial $u\in S$, we have $u^k\in I_1^k$ (resp. $u^k\in I_2^k$) if and only if $u\in \overline{I_1}$ (resp. $u\in \overline{I_2}$). Hence for every monomial $u\in S$ and every $s\geq 1$, we have $u^{sk}\in I_1^{sk}$ (resp. $u^{sk}\in I_2^{sk}$) if and only if $u\in \overline{I_1}$ (resp. $u\in \overline{I_2}$). Now we prove that for this choice of $k$ and for every $s\geq 1$
$${\rm sdepth} (I_1^{sk}/I_2^{sk}) \leq {\rm sdepth} (\overline{I_1}/\overline{I_2}),$$
and this proves our assertion.

Let $$\mathcal{D} : I_1^{sk}/I_2^{sk}=\bigoplus_{i=1}^m t_i \mathbb{K}[Z_i]$$
be a Stanley decomposition of $I_1^{sk}/I_2^{sk}$, such that ${\rm sdepth}(\mathcal{D})={\rm sdepth} (I_1^{sk}/I_2^{sk})$. By the argument above, for every monomial $u\in \overline{I_1}\setminus \overline{I_2}$, we have $$u^{sk}\in I_1^{sk}\setminus I_2^{sk}.$$
Now for each monomial $u\in \overline{I_1}\setminus \overline{I_2}$ we define $Z_u:=Z_i$ and $t_u:=t_i$, where $i\in\{1, \ldots, m\}$ is the uniquely determined index such that $u^{sk}\in t_i \mathbb{K}[Z_i]$. It is clear that $\overline{I_1}/\overline{I_2}\subseteq \sum u\mathbb{K}[Z_u]$, where the sum is taken over all monomials $u\in \overline{I_1}\setminus \overline{I_2}$. For the converse inclusion note that for every $u\in \overline{I_1}\setminus \overline{I_2}$ and every $h\in \mathbb{K}[Z_u]$, clearly we have $uh\in \overline{I}$. By the choice of $t_u$ and $Z_u$, we conclude $u^{sk}\in t_u \mathbb{K}[Z_u]$ and therefore $u^{sk}h^{sk}\in t_u \mathbb{K}[Z_u]$. This implies that $u^{sk}h^{sk}\notin I_2^{sk}$ and as  argument above shows and by the choice of $k$, we have $uh\notin \overline{I_2}$. Therefore $$\overline{I_1}/\overline{I_2}= \sum u\mathbb{K}[Z_u],$$ where the sum is taken over all monomials $u\in \overline{I_1}\setminus \overline{I_2}$.

Now for every $1\leq i \leq m$, let
$$U_i=\{u\in \overline{I_1}\setminus\overline{I_2}: Z_u=Z_i \ {\rm and} \ t_u=t_i\}.$$
Without lose of generality we may assume that $U_i\neq \emptyset$ for every $1\leq i \leq l$ and $U_i=\emptyset$ for every $l+1\leq i\leq m$. For every $1\leq i \leq l$, let $u_i$ be the greatest common divisor of elements of $U_i$. Note that $$\overline{I_1}/\overline{I_2}= \sum_{i=1}^l \sum u\mathbb{K}[Z_i],$$
where the second sum is taken over all monomials $u\in U_i$. Since for every $u\in U_i$, $u^{sk}\in t_i \mathbb{K}[Z_i]$, it follows that $u_i^{sk}\in t_i \mathbb{K}[Z_i]$. Therefore $u_i \in \overline{I_1}\setminus \overline{I_2}$ and hence $u_i\in U_i$. Now for every $u\in U_i$, we have $u\mathbb{K}[Z_i]\subseteq u_i\mathbb{K}[Z_i]$ and thus $$\sum_{u\in U_i} u\mathbb{K}[Z_i]=u_i\mathbb{K}[Z_i].$$
It follows that $$\overline{I_1}/\overline{I_2}= \sum_{i=1}^l u_i\mathbb{K}[Z_i].$$

Next we prove that for every $1\leq i,j \leq l$ with $i\neq j$, the summands $u_i\mathbb{K}[Z_i]$ and $u_j\mathbb{K}[Z_j]$ intersect trivially. By contradiction let $v$ be a monomial in $u_i\mathbb{K}[Z_i]\cap u_j\mathbb{K}[Z_j]$. Then there exist $h_i\in \mathbb{K}[Z_i]$ and $h_j\in \mathbb{K}[Z_j]$, such that $u_ih_i=v=u_jh_j$. Therefore $u_i^{sk}h_i^{sk}=v^{sk}=u_j^{sk}h_j^{sk}$. But $u_i\in U_i$ and hence $u_i^{sk}\in t_i \mathbb{K}[Z_i]$, which implies that $u_i^{sk}h_i^{sk}\in t_i \mathbb{K}[Z_i]$. Similarly $u_j^{sk}h_j^{sk}\in t_j \mathbb{K}[Z_j]$. Thus $$v^{sk}\in t_i \mathbb{K}[Z_i]\cap t_j \mathbb{K}[Z_j],$$ which is a contradiction, because $\bigoplus_{i=1}^m t_i \mathbb{K}[Z_i]$ is a Stanley decomposition of $I_1^{sk}/I_2^{sk}$. Therefore  $$\overline{I_1}/\overline{I_2}=\bigoplus_{i=1}^l u_i \mathbb{K}[Z_i]$$
is a Stanley decomposition of $\overline{I_1}/\overline{I_2}$ which proves ${\rm sdepth} (\overline{I_1}/\overline{I_2})\geq \min_{i=1}^l|Z_i|\geq{\rm sdepth} (I_1^{sk}/I_2^{sk})$.
\end{proof}

We illustrate the procedure  of the proof of the Theorem \ref{main} in the following example.

\begin{exmp}
Let $I=(x_1^2x_2^2, x_1^2x_3^2, x_2^2x_3^2)$ be a monomial ideal in the polynomial ring $S=\mathbb{K}[x_1, x_2, x_3]$. Using \cite[Theorem 2.2]{j}, it follows that
$$\overline{I}=(x_1^2x_2^2, x_1^2x_3^2, x_2^2x_3^2, x_1^2x_2x_3, x_1x_2^2x_3, x_1x_2x_3^2).$$

It is clear that for every monomial $u\in \overline{I}$, we have $u^2\in I^2$. One can easily see that
$$\mathcal{D} : I^2=x_1^4x_2^4\mathbb{K}[x_1, x_2]\oplus x_1^4x_3^4\mathbb{K}[x_1, x_3]\oplus x_2^4x_3^4\mathbb{K}[x_2, x_3]$$
$$\oplus x_1^4x_2^4x_3\mathbb{K}[x_1, x_2]\oplus x_1^4x_2x_3^4\mathbb{K}[x_1, x_3]\oplus x_1x_2^4x_3^4\mathbb{K}[x_2, x_3]$$
$$\oplus x_1^4x_2^2x_3^2\mathbb{K}[x_1, x_2]\oplus x_1^2x_2^4x_3^2\mathbb{K}[x_2, x_3]\oplus x_1^2x_2^2x_3^4\mathbb{K}[x_1, x_3]$$
$$\oplus x_1^4x_2^2x_3^3\mathbb{K}[x_1, x_2]\oplus x_1^3x_2^4x_3^2\mathbb{K}[x_2, x_3]\oplus x_1^2x_2^3x_3^4\mathbb{K}[x_1, x_3]$$
$$\oplus x_1^4x_2^4x_3^4\mathbb{K}[x_1, x_2, x_3]$$
is a Stanley decomposition of $I^2$ and indeed ${\rm sdepth} (I^2)=2$. Now we construct a Stanley decomposition $\mathcal{D}'$ for $\overline{I}$, with ${\rm sdepth} (\mathcal{D}')=2$.
Note that there is no monomial $u\in \overline{I}$, such that
$$u^2\in x_1^4x_2^4x_3\mathbb{K}[x_1, x_2]\oplus x_1^4x_2x_3^4\mathbb{K}[x_1, x_3]\oplus x_1x_2^4x_3^4\mathbb{K}[x_2, x_3]$$
$$\oplus x_1^4x_2^2x_3^3\mathbb{K}[x_1, x_2]\oplus x_1^3x_2^4x_3^2\mathbb{K}[x_2, x_3]\oplus x_1^2x_2^3x_3^4\mathbb{K}[x_1, x_3].$$
Now the greatest common divisor of monomials $u \in \overline{I}$ with $u^2\in x_1^4x_2^4\mathbb{K}[x_1, x_2]$ is equal to $x_1^2x_2^2$. Therefore the Stanley space $x_1^2x_2^2\mathbb{K}[x_1, x_2]$ appears as a direct summand in our desired Stanley decomposition. Similarly $x_1^2x_3^2\mathbb{K}[x_1, x_3], x_2^2x_3^2\mathbb{K}[x_2, x_3], x_1^2x_2x_3\mathbb{K}[x_1, x_2],$ $x_1x_2^2x_3\mathbb{K}[x_2, x_3], x_1x_2x_3^2\mathbb{K}[x_1, x_3]$ and $x_1^2x_2^2x_3^2\mathbb{K}[x_1, x_2, x_3]$ are the other Stanley spaces in our desired Stanley decomposition. Therefore we derive the following Stanley decomposition for $\overline{I}$.
$$\mathcal{D}' : \overline{I}=x_1^2x_2^2\mathbb{K}[x_1, x_2]\oplus x_1^2x_3^2\mathbb{K}[x_1, x_3]\oplus x_2^2x_3^2\mathbb{K}[x_2, x_3]$$
$$\oplus x_1^2x_2x_3\mathbb{K}[x_1, x_2]\oplus x_1x_2^2x_3\mathbb{K}[x_2, x_3]\oplus x_1x_2x_3^2\mathbb{K}[x_1, x_3]$$
$$\oplus x_1^2x_2^2x_3^2\mathbb{K}[x_1, x_2, x_3]$$
This shows that ${\rm sdepth} (\overline{I})\geq2$ and in fact one can easily see that ${\rm sdepth} (\overline{I})=2$.

\end{exmp}

\begin{cor} \label{mainc}
Let $I\subset S$ be a monomial ideal. Then there exist integers $k_1, k_2 \geq 1$, such that for every $s\geq 1,$
$${\rm sdepth} (I^{sk_1}) \leq {\rm sdepth} (\overline{I})$$
and
$${\rm sdepth} (S/I^{sk_2}) \leq {\rm sdepth} (S/\overline{I}).$$
\end{cor}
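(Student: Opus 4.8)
The plan is to derive both inequalities as degenerate special cases of Theorem \ref{main}, applied to suitably chosen pairs $I_2 \subseteq I_1$. No new machinery is needed; the content of the corollary is entirely contained in the theorem once the right boundary choices are made.

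For the first inequality, I would invoke Theorem \ref{main} with $I_1 = I$ and $I_2 = (0)$. The containment $(0) \subseteq I$ holds trivially, and since no nonzero monomial lies in the zero ideal we have $\overline{(0)} = (0)$, so $(0)^{sk} = (0)$ for every exponent. The theorem then supplies an integer $k_1 \geq 1$ such that for all $s \geq 1$ we have ${\rm sdepth}(I^{sk_1}/(0)^{sk_1}) \leq {\rm sdepth}(\overline{I}/\overline{(0)})$, which is exactly the claimed ${\rm sdepth}(I^{sk_1}) \leq {\rm sdepth}(\overline{I})$.

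For the second inequality, I would apply Theorem \ref{main} with $I_1 = S$ and $I_2 = I$. Here $S = (1)$ is the unit ideal, hence a monomial ideal; it is integrally closed, so $\overline{S} = S$, and it satisfies $S^{sk} = S$ for every $sk$. The theorem then yields an integer $k_2 \geq 1$ such that for all $s \geq 1$ we have ${\rm sdepth}(S^{sk_2}/I^{sk_2}) \leq {\rm sdepth}(\overline{S}/\overline{I})$, that is, ${\rm sdepth}(S/I^{sk_2}) \leq {\rm sdepth}(S/\overline{I})$. Because the two invocations of the theorem are independent, the resulting exponents $k_1$ and $k_2$ need not coincide, which is why the statement records them as separate integers.

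The only step requiring any verification is confirming that the degenerate ideals $(0)$ and $S$ are legitimate inputs to Theorem \ref{main}: that each is a monomial ideal, that the containments $(0) \subseteq I$ and $I \subseteq S$ hold, and that $\overline{(0)} = (0)$ and $\overline{S} = S$. This is routine and presents no genuine obstacle, so I do not anticipate a hard part in this argument; the whole difficulty lives in Theorem \ref{main}, which is already established.
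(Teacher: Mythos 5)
Your proposal is correct and is precisely the derivation the paper intends: Corollary \ref{mainc} is stated as an immediate consequence of Theorem \ref{main}, obtained by the two degenerate specializations $(I_1,I_2)=(I,(0))$ and $(I_1,I_2)=(S,I)$, exactly as you describe (the same specializations underlie the corollary following Theorem \ref{first}). Your checks that $(0)$ and $S=(1)$ are monomial ideals with $\overline{(0)}=(0)$ and $\overline{S}=S$ are the only points needing verification, and they hold since $S$ is a domain.
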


Corollary \ref{mainc} shows that if $I$ is an integrally closed monomial ideal, then there exists an integer $k$ such that for every $s\geq 1$ the inequality ${\rm sdepth} (S/I^{sk}) \leq {\rm sdepth} (S/I)$ holds. The following example shows that this inequality does not necessarily hold if $I$ is not integrally closed.

\begin{exmp}
Let $I=(x_1^4, x_1^3x_2, x_1x_2^3, x_2^4, x_1^2x_2^2x_3)$ be a monomial ideal in the polynomial ring $S=\mathbb{K}[x_1, x_2, x_3]$. Then
$$I^2=(x_1^8, x_1^7x_2, x_1^5x_2^3, x_1^4x_2^4, x_1^6x_2^2, x_1^3x_2^5, x_1^2x_2^6, x_1x_2^7, x_2^8).$$
Now the variable $x_3$ does not divide the minimal generators of $I^2$. Hence it does not divide the minimal generators of $I^{2t}=(I^2)^t$, for every integer $t\geq 1$. Therefore the maximal ideal $\mathfrak{m}=(x_1,x_2,x_3)$ of $S$ is not an associated prime of $S/I^{2t}$ and so \cite[Proposition 2.13]{br} implies that ${\rm sdepth}(S/I^{2t})\geq 1$, for every integer $t\geq 1$. But as we mentioned in Example \ref{ex3}, ${\rm sdepth}(S/I)= 0$. This shows that there does not exist any integer $k$ such that for every $s\geq 1$ the inequality ${\rm sdepth} (S/I^{sk}) \leq {\rm sdepth} (S/I)$ holds
\end{exmp}

The following corollaries are immediate consequences of Theorem \ref{main} and Corollary \ref{mainc}.

\begin{cor}
Let $I_2\subseteq I_1$ be two monomial ideals in $S$. Then
$$\min_k \{{\rm sdepth} (I_1^k/I_2^k)\} \leq {\rm sdepth} (\overline{I_1}/\overline{I_2}).$$
\end{cor}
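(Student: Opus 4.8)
The plan is to deduce this statement directly from Theorem~\ref{main}, which already carries out all of the substantive work; the corollary is purely a matter of unwinding the minimum. First I would record that the quantity $\min_k \{{\rm sdepth} (I_1^k/I_2^k)\}$ is well defined. For each integer $k\geq 1$ the module $I_1^k/I_2^k$ is a finitely generated $\mathbb{Z}^n$-graded $S$-module, so its Stanley depth is an integer lying between $0$ and $n$. Consequently the set $\{{\rm sdepth}(I_1^k/I_2^k) : k\geq 1\}$ is a nonempty subset of $\{0,1,\dots,n\}$ and therefore attains a minimum.

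Next I would invoke Theorem~\ref{main} applied to the pair $I_2\subseteq I_1$. It produces an integer $k\geq 1$, depending only on $I_1$ and $I_2$, such that for every $s\geq 1$ the inequality ${\rm sdepth}(I_1^{sk}/I_2^{sk}) \leq {\rm sdepth}(\overline{I_1}/\overline{I_2})$ holds. Specializing to $s=1$ singles out one power index, namely $k$ itself, for which
$${\rm sdepth}(I_1^{k}/I_2^{k}) \leq {\rm sdepth}(\overline{I_1}/\overline{I_2}).$$

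Finally, since the minimum of a set of integers is at most any particular member of that set, taking the member indexed by this $k$ gives
$$\min_k \{{\rm sdepth}(I_1^k/I_2^k)\} \leq {\rm sdepth}(I_1^{k}/I_2^{k}) \leq {\rm sdepth}(\overline{I_1}/\overline{I_2}),$$
which is exactly the asserted inequality. There is no genuine obstacle at this stage: the entire difficulty has been absorbed into Theorem~\ref{main}, whose proof builds an explicit Stanley decomposition of $\overline{I_1}/\overline{I_2}$ out of a given one of $I_1^{sk}/I_2^{sk}$ by grouping the monomials according to which Stanley space their $(sk)$-th powers land in and taking greatest common divisors. The only point in the present corollary that warrants a comment is the existence of the minimum, and that follows from the boundedness of Stanley depth above by $n$.
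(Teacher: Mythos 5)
Your proof is correct and matches the paper's intent exactly: the paper states this corollary as an immediate consequence of Theorem~\ref{main}, and your argument (apply the theorem, specialize to $s=1$, and bound the minimum by that particular term) is precisely the unwinding the paper has in mind. The remark on well-definedness of the minimum, via $0 \leq {\rm sdepth} \leq n$, is a sensible addition but not a point of divergence.
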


\begin{cor}
Let $I\subset S$ be a monomial ideal. Then
$$\min_k \{{\rm sdepth} (S/I^k)\} \leq {\rm sdepth} (S/\overline{I}),$$
and
$$\min_k \{{\rm sdepth} (I^k)\} \leq {\rm sdepth} (\overline{I}).$$
\end{cor}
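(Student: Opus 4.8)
The plan is to deduce both inequalities directly from Corollary \ref{mainc}, since no new idea is needed once that result is in hand. The only preliminary observation I would record is that for any finitely generated $\mathbb{Z}^n$-graded $S$-module $M$ the Stanley depth ${\rm sdepth}(M)$ is a nonnegative integer bounded above by $n$. Consequently the sets $\{{\rm sdepth}(I^k)\mid k\geq 1\}$ and $\{{\rm sdepth}(S/I^k)\mid k\geq 1\}$ are contained in the finite set $\{0,1,\dots,n\}$, so both minima $\min_k\{{\rm sdepth}(I^k)\}$ and $\min_k\{{\rm sdepth}(S/I^k)\}$ are genuinely attained and the notation $\min_k$ is legitimate.

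With this in place, I would invoke Corollary \ref{mainc} to fix integers $k_1,k_2\geq 1$ with ${\rm sdepth}(I^{sk_1})\leq{\rm sdepth}(\overline{I})$ and ${\rm sdepth}(S/I^{sk_2})\leq{\rm sdepth}(S/\overline{I})$ for every $s\geq 1$. Specializing to $s=1$ gives the two inequalities
$${\rm sdepth}(I^{k_1})\leq{\rm sdepth}(\overline{I}),\qquad {\rm sdepth}(S/I^{k_2})\leq{\rm sdepth}(S/\overline{I}).$$
Since $k_1$ and $k_2$ are particular values of the exponent, the minimum taken over all powers can only be smaller, so
$$\min_k\{{\rm sdepth}(I^k)\}\leq{\rm sdepth}(I^{k_1})\leq{\rm sdepth}(\overline{I})$$
and
$$\min_k\{{\rm sdepth}(S/I^k)\}\leq{\rm sdepth}(S/I^{k_2})\leq{\rm sdepth}(S/\overline{I}),$$
which are precisely the two asserted inequalities.

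There is no genuine obstacle here: all of the substantive work has already been carried out in Theorem \ref{main} and its specialization Corollary \ref{mainc}, where a Stanley decomposition of the power $I_1^{sk}/I_2^{sk}$ is transported, via the correspondence $u\mapsto u^{sk}$, to a Stanley decomposition of $\overline{I_1}/\overline{I_2}$. The present statement merely packages the elementary fact that an inequality valid for one specific power of $I$ is automatically valid for the minimum over all powers. The only item deserving an explicit word is the finiteness of the range of ${\rm sdepth}$, which is what makes passing to $\min_k$ meaningful.
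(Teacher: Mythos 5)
Your proof is correct and takes essentially the same route as the paper, which states this corollary as an immediate consequence of Corollary \ref{mainc}: specialize to $s=1$ and observe that $\min_k$ is bounded above by the value at the particular exponents $k_1$, $k_2$. Your extra remark that ${\rm sdepth}$ takes values in $\{0,1,\dots,n\}$, so the minima are genuinely attained, is a harmless detail the paper leaves implicit.
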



\section{An application of Stanley depth} \label{sec3}

Let $I$ be a monomial ideal of the polynomial ring $S=\mathbb{K}[x_1,\dots,x_n]$. In this section we will examine the sets
of associated primes of the powers of $I$, that is, the sets
$${\rm Ass}(S/I^k) = \{P\subset S : P {\rm \ is \ prime \ and\ } P = (I^k : c) {\rm \ for\ some\ } c\in S\}, \ \ \ \  k \geq 1.$$
Since $I$ is a monomial ideal of a polynomial ring $S$, the associated primes will be monomial
primes, which are primes that are generated by subsets of the variables, see \cite[Corollary 1.3.9]{hh'}.
In \cite{b'}, Brodmann showed that the sets ${\rm Ass}(S/I^k)$ stabilize for large $k$. That is, there exists
a positive integer $N_1\geq 1$ such that ${\rm Ass}(S/I^k) = {\rm Ass}(S/I^{N_1})$ for all $k\geq N_1$.
We denote the set ${\rm Ass}(S/I^{N_1})$ by ${\rm Ass}_{\infty}(S/I)$. Ratliff studied the set of
associated primes of the integral closure of powers of ideals. By his results \cite{r1,r2}, one has that
 the sets ${\rm Ass}(S/\overline{I^k})$ form an ascending chain which
stabilizes for large $k$. Thus, there exists $N_2\geq 1$ such that ${\rm Ass}(S/\overline{I^k})
 = {\rm Ass}(S/\overline{I^{N_2}})$ for all $k\geq N_2$. We denote the set ${\rm Ass}(S/\overline{I^{N_2}})$ by ${\rm \overline{Ass}}_{\infty}(S/I)$. The
set ${\rm \overline{Ass}}_{\infty}(S/I)$ is nicely described in \cite{m}. It is known \cite[Theorem 2.8]{r2} that the inclusion ${\rm \overline{Ass}}_{\infty}(S/I)\subseteq {\rm Ass}_{\infty}(S/I)$ holds for any ideal $I$ of a commutative Noetherian ring (see \cite[Proposition 3.17]{m'} for additional details). As an application of Corollary \ref{mainc} we give a new proof for this result in the case of monomial ideals. To the best of my knowledge, this would be the first application
of Stanley depth.

First we need to introduce some notation and basic facts.

Let $P = (x_{i_1}, \ldots, x_{i_r})$ be a monomial prime ideal in $S$, and $I\subseteq S$ any monomial ideal and let $L=[n]\setminus\{x_{i_1}, \ldots, x_{i_r}\}$. We denote by $I(P)$ the monomial ideal in the polynomial ring $S(P) = \mathbb{K}[x_{i_1}, \ldots, x_{i_r}]$, which is obtained from $I$ by applying the $\mathbb{K}$-algebra homomorphism $S\rightarrow S(P)$ with $x_i\mapsto 1$ for all $i\in L$. It is known that (\cite[Lemma 1.3]{hrv})
$${\rm Ass}(S(P)/I(P))=\{Q\in {\rm Ass}(S/I): x_i\notin Q {\rm \ for \ all \ } i\in L\}.$$
We use this simple fact for proving Theorem \ref{ass}.

We also need the following simple lemma.

\begin{lem} \label{simple}
For every monomial ideal $I$ and every monomial prime ideal $P=(x_{i_1}, \ldots, x_{i_r})$ of the polynomial ring $S=\mathbb{K}[x_1,\dots,x_n]$, we have $\overline{I(P)}=\overline{I}(P)$, as ideals of the polynomial ring $S(P) = \mathbb{K}[x_{i_1}, \ldots, x_{i_r}]$.
\end{lem}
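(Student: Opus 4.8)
The plan is to prove the two inclusions $\overline{I}(P) \subseteq \overline{I(P)}$ and $\overline{I(P)} \subseteq \overline{I}(P)$ separately, using throughout the characterization that a monomial $w$ belongs to the integral closure of a monomial ideal precisely when $w^k$ lies in the corresponding power of that ideal for some $k$ (\cite[Theorem 1.4.2]{hh'}). Write $\pi \colon S \to S(P)$ for the $\mathbb{K}$-algebra homomorphism sending $x_i \mapsto 1$ for $i\in L$ and fixing the variables in $P$, so that $I(P) = \pi(I)S(P)$ is the monomial ideal generated by the images of the generators of $I$. A preliminary observation I would record first is that $\pi$ commutes with taking powers: since $I^k$ is generated by the $k$-fold products of generators of $I$ and $\pi$ is a ring homomorphism, the generators of $(I^k)(P)$ are exactly the $k$-fold products of generators of $I(P)$, whence $(I^k)(P) = (I(P))^k$ for every $k\geq 1$.

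The inclusion $\overline{I}(P) \subseteq \overline{I(P)}$ is the easy direction. Every monomial generator of $\overline{I}(P)$ has the form $\pi(u)$ for a monomial $u\in\overline{I}$. Choosing $k$ with $u^k\in I^k$ and applying $\pi$ gives $\pi(u)^k=\pi(u^k)\in (I^k)(P)=(I(P))^k$, so $\pi(u)\in\overline{I(P)}$ by the characterization above.

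For the reverse inclusion $\overline{I(P)} \subseteq \overline{I}(P)$, which I expect to be the main obstacle, I would argue by a lifting construction. Let $v$ be a monomial in $\overline{I(P)}$; since $v\in S(P)$ it involves only variables in $P$ and may be regarded simultaneously as a monomial of $S$ that is fixed by $\pi$. Pick $k\geq 1$ with $v^k\in (I(P))^k=(I^k)(P)$. Then the monomial $v^k$ is divisible in $S(P)$ by $\pi(w)$ for some generator $w\in G(I^k)$; write $w=w_Pw_L$, where $w_P$ collects the variables of $w$ lying in $P$ and $w_L$ those lying in $L$, so that $\pi(w)=w_P$ and $w_P\mid v^k$. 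Now set $u:=v\,w_L\in S$. Then $\pi(u)=v$, and I claim $u\in\overline{I}$: writing $v^k=w_Pg$ for a monomial $g\in S(P)$, one computes $u^k=v^kw_L^k=w_Pw_L^kg=w\cdot(w_L^{k-1}g)\in I^k$, since $w\in I^k$ and $w_L^{k-1}g$ is a monomial of $S$. Hence $u\in\overline{I}$ and therefore $v=\pi(u)\in\overline{I}(P)$. Combining the two inclusions yields the lemma.

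The only delicate point is the step $v^k\in(I^k)(P)$, which rests on the interchange $(I^k)(P)=(I(P))^k$ recorded at the outset, together with the fact that membership of a monomial in the extension $\pi(I^k)S(P)$ is witnessed by divisibility by the image $\pi(w)$ of a single generator $w$ of $I^k$. This is exactly what allows the $L$-part $w_L$ of that generator to be reattached to $v$, producing a genuine preimage of $v$ that is forced back inside $I^k$.
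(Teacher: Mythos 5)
Your proof is correct and follows essentially the same strategy as the paper's: the easy inclusion $\overline{I}(P)\subseteq\overline{I(P)}$ via applying $\pi$ to an integral equation $u^k\in I^k$, and the reverse inclusion by lifting a monomial $v\in\overline{I(P)}$ back to $S$ through multiplication by $L$-variables so that its $k$-th power is forced into $I^k$. The only difference is one of precision: the paper multiplies by ``sufficiently large'' powers $x_1^{l_1}\cdots x_{n-r}^{l_{n-r}}$ and leaves the verification implicit, whereas you identify the exact witness $w_L$ (the $L$-part of the generator $w$ of $I^k$ with $\pi(w)\mid v^k$), which makes the divisibility computation $u^k=w\cdot(w_L^{k-1}g)$ fully explicit.
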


\begin{proof}
It is clear that $\overline{I}(P)\subseteq \overline{I(P)}$. Hence it suffices to prove the converse inclusion. Let $L=[n]\setminus\{x_{i_1}, \ldots, x_{i_r}\}$. Without loss of generality we may assume that $L=\{x_1, \ldots, x_{n-r}\}$. For every monomial $u\in \overline{I(P)}$, there exists an integer $k\geq 1$, such that $u^k\in I(P)^k$. Then for sufficiently large integers $l_1, \ldots l_{n-r}$, we have $(ux_1^{l_1}\ldots x_{n-r}^{l_{n-r}})^k= u^kx_1^{kl_1}\ldots x_{n-r}^{kl_{n-r}}\in I^k$. Hence $ux_1^{l_1}\ldots x_{n-r}^{l_{n-r}}\in \overline{I}$ and thus $u\in \overline{I}(P)$, which implies that $\overline{I(P)}=\overline{I}(P)$.
\end{proof}

Now we are ready to give a new proof for the theorem of Ratliff (\cite[Theorem 2.8]{r2}) in the case of monomial ideals.

\begin{thm} \label{ass}
\cite[Theorem 2.8]{r2} Let $I \subseteq S$ be a monomial ideal. Then ${\rm \overline{Ass}}_{\infty}(S/I)\subseteq {\rm Ass}_{\infty}(S/I)$.
\end{thm}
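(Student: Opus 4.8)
The plan is to reduce the containment to a single statement about the graded maximal ideal of a smaller polynomial ring, where membership in ${\rm Ass}$ is detected by the vanishing of Stanley depth, and then to feed Corollary \ref{mainc} into this reduction. The dictionary entry I would use repeatedly is the standard fact already invoked in Examples \ref{ex1} and \ref{ex2}: for a monomial ideal $J$ in a polynomial ring $T$ with graded maximal ideal $\mathfrak{m}_T$, one has $\mathfrak{m}_T\in {\rm Ass}(T/J)$ if and only if ${\rm sdepth}(T/J)=0$, by \cite[Proposition 1.3]{hvz} and \cite[Proposition 2.13]{br}.

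First I would fix $P\in {\rm \overline{Ass}}_{\infty}(S/I)$ and write $P=(x_{i_1},\ldots,x_{i_r})$ with $L=[n]\setminus\{x_{i_1},\ldots,x_{i_r}\}$. Since the chain $\{{\rm Ass}(S/\overline{I^k})\}$ is ascending and stabilizes to ${\rm \overline{Ass}}_{\infty}(S/I)$, we have $P\in {\rm Ass}(S/\overline{I^k})$ for all large $k$. Because $x_i\notin P$ for every $i\in L$, the description of ${\rm Ass}(S(P)/J(P))$ recalled before Lemma \ref{simple} (from \cite[Lemma 1.3]{hrv}) identifies $P$ with the maximal ideal $\mathfrak{m}$ of $S(P)$ and gives $\mathfrak{m}\in {\rm Ass}(S(P)/\overline{I^k}(P))$ for all large $k$. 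Applying Lemma \ref{simple} to $I^k$ together with the multiplicativity of the specialization $J\mapsto J(P)$ (the image under a $\mathbb{K}$-algebra homomorphism, so $(I^k)(P)=(I(P))^k$), I would rewrite this as $\mathfrak{m}\in {\rm Ass}(S(P)/\overline{(I(P))^k})$, that is, ${\rm sdepth}(S(P)/\overline{(I(P))^k})=0$, for all large $k$.

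Now I would fix one such large $k$ and apply Corollary \ref{mainc} not to $I(P)$ but to the monomial ideal $(I(P))^k$ of $S(P)$. This produces an integer $k_2\geq 1$ with ${\rm sdepth}(S(P)/(I(P))^{ksk_2})={\rm sdepth}(S(P)/((I(P))^k)^{sk_2})\leq {\rm sdepth}(S(P)/\overline{(I(P))^k})=0$ for every $s\geq 1$. By the dictionary this forces $\mathfrak{m}\in {\rm Ass}(S(P)/(I(P))^{ksk_2})$ for every $s$, and translating back through \cite[Lemma 1.3]{hrv} and the identity $(I(P))^{ksk_2}=(I^{ksk_2})(P)$ yields $P\in {\rm Ass}(S/I^{ksk_2})$ for every $s\geq 1$. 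Since $ksk_2\to\infty$, Brodmann's stabilization of $\{{\rm Ass}(S/I^m)\}$ from \cite{b'} forces $P\in {\rm Ass}_{\infty}(S/I)$, which is the desired conclusion.

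I expect the only delicate point to be the bookkeeping around the specialization $J\mapsto J(P)$: that it commutes with taking powers and, via Lemma \ref{simple}, with integral closure, and that the maximal ideal of $S(P)$ corresponds to $P$ under the \cite{hrv} correspondence. The genuinely load-bearing idea, and the reason the argument must go through ${\rm \overline{Ass}}_{\infty}$ rather than ${\rm Ass}(S/\overline{I})$, is that Corollary \ref{mainc} has to be applied to the power $(I(P))^k$ for $k$ past the stabilization threshold, so that $\overline{(I(P))^k}$ already carries $\mathfrak{m}$ as an associated prime; applying it to $I(P)$ itself would give no information, since ${\rm sdepth}(S(P)/\overline{I(P)})$ need not vanish.
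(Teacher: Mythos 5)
Your proposal is correct and takes essentially the same route as the paper's own proof: both pass to $S(P)$ via \cite[Lemma 1.3]{hrv} and Lemma \ref{simple}, detect membership of the maximal ideal of $S(P)$ in ${\rm Ass}$ through the vanishing of Stanley depth (\cite[Proposition 1.3]{hvz} and \cite[Proposition 2.13]{br}), and apply Corollary \ref{mainc} to a power $I(P)^{N_2}$ past the stabilization threshold for the integral closures --- exactly the ``load-bearing'' point you identify. The only cosmetic difference is that you keep the exponent $ksk_2$ explicit and invoke Brodmann's stabilization at the end, whereas the paper fixes a single exponent $l\geq N_1$ at the outset; the substance is identical.
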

\begin{proof}
Let $N_1, N_2\geq 1$ be two integers such that $${\rm Ass}(S/I^{N_1})={\rm Ass}(S/I^{N_1+k})$$ and $${\rm Ass}(S/\overline{I^{N_2}})={\rm Ass}(S/\overline{I^{N_2+k}}),$$ for every integer $k\geq 1$. Let $P\in {\rm Ass}(S/\overline{I^{N_2}})$ be a monomial prime ideal of $S$. Then by \cite[Lemma 1.3]{hrv}, we have $$P\in {\rm Ass}(S(P)/\overline{I^{N_2}}(P))= {\rm Ass}(S(P)/\overline{I^{N_2}(P)})= {\rm Ass}(S(P)/\overline{I(P)^{N_2}}),$$ where the first equality follows from Lemma \ref{simple} and the second equality is trivial. Since $P$ is the maximal ideal of $S(P)$, It follows from \cite[Proposition 1.3]{hvz} (see also \cite{a}) that $${\rm sdepth}_{S(P)}(S(P)/\overline{I(P)^{N_2}})=0.$$
By Corollary \ref{mainc}, there exists an integer $l\geq N_1$ such that
$${\rm sdepth}_{S(P)}(S(P)/I(P)^l)\leq {\rm sdepth}_{S(P)}(S(P)/\overline{I(P)^{N_2}}),$$
and therefore $${\rm sdepth}_{S(P)}(S(P)/I(P)^l)=0.$$ Thus according to \cite[Proposition 2.13]{br}, $P$ is an associated prime of $S(P)/I(P)^l$. Now by \cite[Lemma 1.3]{hrv}, $$P\in {\rm Ass}(S/I^l)={\rm Ass}(S/I^{N_1}).$$ Thus
$${\rm \overline{Ass}}_{\infty}(S/I)={\rm Ass}(S/\overline{I^{N_2}})\subseteq {\rm Ass}(S/I^{N_1})={\rm Ass}_{\infty}(S/I),$$
and this completes the proof of the theorem.
\end{proof}



\section*{Acknowledgments}
This work was done while the author visited Philipps-Universit${\rm \ddot{a}}$t Marburg supported
by DAAD. The author thanks  J${\rm \ddot{u}}$rgen Herzog and Volkmar Welker for useful discussions
during the preparation of the article. He is also grateful to Irena Swanson for generously sharing her knowledge about integral closure.
 He also thanks Siamak Yassemi for reading an earlier version of this article and for his helpful comments. The author would like to thank the referee for his/her careful reading of the paper
and for his/her valuable comments.



\end{document}